\numberwithin{equation}{section}
\newtheorem{thm}{Theorem}[section]
\newtheorem{lem}[thm]{Lemma}
\theoremstyle{definition}
\theoremstyle{remark}
\newtheorem{rem}[thm]{Remark}
\theoremstyle{definition}
\newcommand{\XX}{\mathbb{X}}
\newcommand{\PP}{\mathbb{P}}
\newcommand{\pr}{\mathbb{P}}
\begin{document}

\title{Plane curves containing a star configuration}

\author[E. Carlini]{Enrico Carlini}
\address[E. Carlini]{DISMA, Department of Mathematical Sciences, Poltiecnico di Torino, Turin, taly}
\email{enrico.carlini@polito.it}
\address[E. Carlini]{School of Mathematical Sciences, Monash University, Clayton, Australia}
\email{enrico.carlini@monash.edu}

\author[E. Guardo]{Elena Guardo}
\address[E. Guardo]{Dipartimento di Matematica,
Universita di Catania, Catania, Italy}
\email{guardo@dmi.unict.it}

\author[A. Van Tuyl]{Adam Van Tuyl}
\address[A. Van Tuyl]{Department of Mathematical Sciences,
Lakehead University, Thunder Bay, ON, Canada, P7B 5E1}
\email{avantuyl@lakeheadu.ca}

%%% ---------------------------------------------------------------

\keywords{star configuration points, plane curves}
\subjclass[2000]{14M05, 14H50}
%\thanks{Version: \red{October 1, 2014}}

%%% ---------------------------------------------------------------

\begin{abstract}
Given a collection of $l$ general lines $\ell_1,\ldots,\ell_{l}$
in $\pr^2$, the star configuration $\XX(l)$
is the set of points constructed from all pairwise intersections of
these lines.
For each non-negative integer $d$, we compute the dimension of the
family of curves of degree $d$ that contain a star configuration.
\end{abstract}

%%% ----------------------------------------------------------------------
\maketitle
%%% --------------------------------------------------------------------

\section{Introduction}

Throughout this paper, $S = k[x_0,x_1,x_2]$ with $k$ an
algebraically closed field.  Given any linear form $L \in S$, we
let $\ell$ denote the corresponding line in $\pr^2$. 
{{Let
$\ell_1,\ldots,\ell_{l}$ be a set of $l$ distinct lines in
$\pr^2$.  Classically, the union of these $l$ lines is called an
{\it  $l$-lateral}. A {\it complete $l$-lateral} is the union of 
$l$ lines, such that $\ell_i \cap \ell_j \cap \ell_k = \emptyset$ for all triples $\{i,j,k\}
\subseteq \{1,\ldots,l\}$.  We say that a plane curve has an inscribed
$l$-lateral if it contains the $\binom{l}{2}$ vertices of the
$l$-lateral, that is, the $\binom{l}{2}$ points formed
by taking all possible intersections of lines.   

Let $\XX(l)$ denote the collection of points formed by taking all possible
intersections of a complete $l$-lateral.  Such a collection of points
is sometimes called a {\it star configuration}.}
%Given a collection of $l$ linear forms $L_1,\ldots,L_{l}$ in $S$
%such that $\ell_i \cap \ell_j \cap \ell_k = \emptyset$ for all
%triples $\{i,j,k\} \subseteq \{1,\ldots,l\}$, the {\it star
%configuration} $\XX(l)$ is the set of $\binom{l}{2}$ points formed
%by taking all possible intersections of lines.
The name star configuration arises from the fact that a {{complete $5$-lateral}
that contains an $\XX(5)$ resembles a star.  These special
configurations, and their generalizations in $\pr^n$, have recently risen
in prominence due, in part, to the fact that they have nice
algebraic properties (e.g., the minimal generators are products of
linear forms), but at the same time exhibit some extremal
properties (e.g., the work of Bocci and Harbourne \cite{BH} which
compares symbolic and regular powers of ideals). The papers
\cite{AS,BCH,FHL,GHM,GMS,PS,S} are some of the papers that have
contributed to our understanding of star configurations.
{Because this paper is
related to our previous papers (see \cite{CGVT,CVT}) on star configurations, we shall
prefer to use the terminology of star configurations as opposed
to the language of $l$-laterals found in \cite{D,ottaviani sernesi, ottaviani sernesi2}.
Moreover, star configurations may better lend themselves to higher dimensional generalizations (see
our concluding remarks).  Moving forward, we will primarily refer
to the family of curves in
$\pr^2$ of degree $d$ that ``contain a star configuration
$\XX(l)$''  as opposed to  ``contain an inscribed $l$-lateral''.}

In this paper we compute the dimension of the family of curves in
$\pr^2$ of degree $d$ that contain a star configuration $\XX(l)$,
{{or equivalently, an inscribed $l$-lateral.}
More precisely, if $l> 2$, consider the quasi-projective variety
\[\mathcal{D}_l \subseteq \underbrace{\check{\pr}^2 \times \cdots \times
\check{\pr}^2}_l\]
where $(\ell_1,\ldots,\ell_l) \in \mathcal{D}_l$ if and only if
no three of the lines meet at a point;  here $\check{\pr}^2$ denotes
the dual projective space. Notice that $\mathcal{D}_l$ can be seen as a 
parameter space for star configuration set of points obtained by intersecting $l$ 
general lines. With a slight abuse of notation, we will often write $\XX(l)\in\mathcal{D}_l$, 
thus identifying a star configuration with the unique set of lines defining it.

We construct the following incidence correspondence
\begin{equation}\label{correspondence}
\Sigma_{d,l} = \{(\mathcal{C},\XX(l)) :  \mathcal{C} \supseteq \XX(l) \} \subseteq
\PP S_d \times \mathcal{D}_l. \tag{$\star$}
\end{equation}
Letting $\phi_{d,l}:\Sigma_{d,l} \rightarrow \PP S_d$ denote the natural
projection map, we define the {\it locus of degree $d$ curves containing
a  star configuration $\XX(l)$}, denoted $\mathcal{S}(d,l)$, to be
$\mathcal{S}(d,l) = \phi_{d,l}(\Sigma_{d,l})$.  We then prove the following
result about the dimension of the locus.

\begin{thm}\label{maintheorem} Let $d \geq 0$ and $l \geq 2$ be integers.  Then
$\mathcal{S}(d,l) = \emptyset$ if $d < l-1$, and
\[\dim \mathcal{S}(d,l) = \left\{
\begin{array}{ll}
\binom{d+2}{2}-1 & \mbox{if $d \geq l-1$ and $l=2,3,4$}\\
\binom{d+2}{2}-2 & \mbox{if $d =4$ and $l = 5$} \\
\binom{d+2}{2}-1 & \mbox{if $d \geq 5$ and $l =5$} \\
 \binom{d+2}{2}-\binom{l}{2}+2l-1 & \mbox{if $d \geq l-1$ and $l \geq 6$.}
\end{array}
\right.\]
\end{thm}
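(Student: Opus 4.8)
The plan is to exploit the two projections of the incidence variety $\Sigma_{d,l}$ and to reduce the computation of $\dim \mathcal{S}(d,l)$ to the dimension of a general fiber of $\phi_{d,l}$. First I would analyze the second projection $\psi_{d,l}\colon \Sigma_{d,l}\to\mathcal{D}_l$, whose fiber over a configuration is the linear system $\PP(I_{\XX(l)})_d$ of degree $d$ curves through the $\binom{l}{2}$ vertices; everything here hinges on the Hilbert function of $\XX(l)$. Since a star configuration is arithmetically Cohen--Macaulay with $h$-vector $(1,2,\ldots,l-1)$, its points impose independent conditions in every degree. In particular $(I_{\XX(l)})_d=0$ for $d<l-1$, which gives the emptiness statement, while for $d\geq l-1$ one gets $\dim(I_{\XX(l)})_d=\binom{d+2}{2}-\binom{l}{2}$. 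As this value is independent of the chosen configuration, the fibers of $\psi_{d,l}$ all have the same dimension over the irreducible base $\mathcal{D}_l$; since $\dim\mathcal{D}_l=2l$, the variety $\Sigma_{d,l}$ is irreducible of dimension
\[
\dim \Sigma_{d,l}=2l+\binom{d+2}{2}-\binom{l}{2}-1 .
\]

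Next, because $\Sigma_{d,l}$ is irreducible and (working in characteristic $0$, or after checking separability) $\phi_{d,l}$ is generically smooth, I would write $\dim \mathcal{S}(d,l)=\dim\Sigma_{d,l}-e$, where $e$ is the dimension of a general fiber of $\phi_{d,l}$, that is, the dimension of the family of star configurations inscribed in a general curve of $\mathcal{S}(d,l)$. The key observation is that $e$ equals the dimension of the vertical tangent space $\ker d\phi_{d,l}$: the space of first-order motions $(\dot L_1,\ldots,\dot L_l)$ of the $l$ lines that keep every vertex $p_{ij}=\ell_i\cap\ell_j$ on the fixed curve $\mathcal{C}=\{F=0\}$. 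Moving $\ell_i$ alone slides $p_{ij}$ along $\ell_j$, and moving $\ell_j$ alone slides it along $\ell_i$, so the requirement $\nabla F(p_{ij})\cdot \dot p_{ij}=0$ that $p_{ij}$ remain on $\mathcal{C}$ becomes a single linear equation in the two relevant tangent vectors. Collecting these equations over the $\binom{l}{2}$ vertices yields a $\binom{l}{2}\times 2l$ matrix $A$ with the sparsity pattern of the complete graph $K_l$, and $e=\dim\ker A$.

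The heart of the argument is therefore the rank of $A$ for general data. I expect this rank to take its maximal value $\min\bigl(\binom{l}{2},2l\bigr)$, whence
\[
e=\max\Bigl(0,\ 2l-\binom{l}{2}\Bigr),
\]
and substituting into the dimension formula reproduces all four cases simultaneously: $e=2l-\binom{l}{2}$ gives $\dim\mathcal{S}(d,l)=\binom{d+2}{2}-1$ for $l=2,3,4$; $e=0$ gives $\dim\mathcal{S}(d,l)=\binom{d+2}{2}-\binom{l}{2}+2l-1$ for $l\geq 6$; and for the borderline $l=5$, where $A$ is square, one gets $\dim\mathcal{S}(d,l)=\binom{d+2}{2}-1$ precisely when $\det A\neq 0$. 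To establish the expected rank I would exhibit one explicit set of lines together with one curve through the corresponding vertices for which $A$ has full rank, and then conclude by upper semicontinuity of the corank; the structured, graph-incidence form of $A$ makes such a specialization tractable. Note that a dimension count alone cannot replace this step, which is why the rank computation is the decisive ingredient.

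The main obstacle is the exceptional case $d=4$, $l=5$, where $A$ drops rank identically and $e=1$ rather than $0$. Here each of the five lines meets the quartic $\mathcal{C}$ in exactly $l-1=4=d$ points, all of which are vertices of $\XX(5)$, so the curves in $\mathcal{S}(4,5)$ are exactly the classical Lüroth quartics circumscribing a complete pentalateral, and a general such quartic carries a one-parameter family of inscribed pentalaterals. In this case I would check directly that the $10\times 10$ matrix $A$ is everywhere singular, equivalently that the parametrization $F=\sum_{j=1}^{5}c_j\prod_{i\neq j}L_i$ has one-dimensional fibers, so that $e=1$ and
\[
\dim\mathcal{S}(4,5)=\dim\Sigma_{4,5}-1=\binom{6}{2}-2,
\]
exhibiting $\mathcal{S}(4,5)$ as the expected hypersurface in $\PP S_4$.
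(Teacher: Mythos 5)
Your reduction is sound and is, in substance, the paper's own argument read from the fibre of $\phi_{d,l}$ rather than from the parametrization. The paper maps $(L_1,\ldots,L_l,M_1,\ldots,M_l)\mapsto\sum M_i\hat L_i$ and computes the rank of its differential by evaluating the forms $HQ_t$ at the vertices; since $L_tQ_t\in I_{\XX(l)}$, only $2l$ of those columns survive modulo $I_{\XX(l)}$, and the resulting $\binom{l}{2}\times 2l$ evaluation matrix coincides, entry by entry, with your matrix $A$: the row of $A$ at $p_{ij}$ reads $\hat L_{i,j}(p_{ij})\bigl[M_j(p_{ij})L_i'(p_{ij})+M_i(p_{ij})L_j'(p_{ij})\bigr]=0$, which is exactly the paper's entry for the column $HQ_i$ with $H=L_i'$. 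Your bookkeeping $\dim\mathcal{S}(d,l)=\dim\Sigma_{d,l}-e$ with $e=2l-\mathrm{rk}(A)$ is equivalent to the paper's $\dim\mathcal{S}(d,l)=\binom{d+2}{2}-\binom{l}{2}-1+\mathrm{rk}(A)$, and the arithmetic in all four cases checks out. (One caveat: identifying $e$ with the corank of $d\phi_{d,l}$ at a general point requires characteristic $0$ or a separability argument, as you note; the paper's tangent-space lemma carries the same implicit hypothesis.)

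There are, however, two places where you assert precisely what has to be proved. First, the full-rank statement for one explicit configuration and one explicit curve, for every $l\geq 6$ and every $d\geq l-1$, is the entire technical content of the paper: for $d>l-1$ the entries of $A$ involve the multipliers $M_i$ of the chosen curve $F=\sum M_i\hat L_i$, and the paper must engineer these (products of linear forms vanishing at carefully selected vertices) to force a block-triangular shape, treating $l=6$ first and then $l>6$ via a matrix factorization; ``the specialization is tractable'' is not an argument, and nothing in your outline certifies rank $2l$ uniformly in $(d,l)$. Second, and more seriously, for $(d,l)=(4,5)$ you need $\det A$ to vanish identically on the irreducible $14$-dimensional $\Sigma_{4,5}$, i.e.\ the upper bound $\dim\mathcal{S}(4,5)\leq 13$. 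That is exactly the classical theorem that the generic plane quartic is not a L\"uroth quartic. Your Bezout observation (each line meets the quartic only in the four vertices) holds whenever $l=d+1$ and says nothing about the fibre dimension, and evaluating $\det A$ at sample points bounds the rank only at those points, never identically. The paper does not prove this fact either: it imports Ottaviani--Sernesi's theorem that L\"uroth quartics form a hypersurface (of degree $54$). You must either cite such a result or actually carry out the identical vanishing of a $10\times 10$ symbolic determinant; ``check directly'' as written is a placeholder for the one genuinely nontrivial classical input of the theorem.
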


Theorem \ref{maintheorem} complements our previous work
\cite{CGVT,CVT} which showed that the generic degree $d$ plane
curve contains a star configuration $\XX(l)$ if and only if the
projection map $\phi_{d,l}$ is dominant which happens if and only
if $\dim \mathcal{S}(d,l) = \binom{d+2}{2}-1$.  The tuples $(d,l)$
for which $\dim \mathcal{S}(d,l) = \binom{d+2}{2}-1$ are therefore
precisely the tuples described in \cite[Theorem 6.3]{CVT}. {
{The fact that $\mathcal{S}(d,l)= \emptyset$ for $d<l-1$ comes
from Bezout's Theorem {(see Remark \ref{dimension0})}.}}
%Note that $\mathcal{S}(d,l) = \emptyset$ if no curve of degree
%$d$ contains a star configuration $\XX(l)$.  Because the defining
%ideal of $\XX(l)$ is minimal generated in degree $l-1$, it
%follows that $\mathcal{S}(d,l) = \emptyset$ for $d < l-1$.

It therefore suffices to focus on proving Theorem
\ref{maintheorem} for the pairs $(4,5)$ and $(d,l)$ with $d \geq
l-1$ and $l \geq 6$.  Our strategy for the pairs $(d,l) \neq
(4,5)$ is to first translate the problem into computing the
dimension of a graded ideal constructed from the linear forms
$L_1,\ldots,L_l$ in a particular degree.  This enables us to
reduce the problem to computing the rank of a particular matrix.
We use the notion of L\"uroth quartics to deal with the pair
$(d,l) = (4,5)$. {{Note that $\mathcal{S}(4,5)$ is
the only $\mathcal{S}(d,l)$ whose dimension differs from the
expected one.

The family $\mathcal{S}(d,d+1)$ was also studied by Barth \cite[Application 2]{Ba}, 
who also computed their dimensions, and by 
Ellingsrud, Le Potier, and Str\o mme \cite[Section 4]{ELS}, who
raised the still-open question of computing the cardinality of the fibres of 
$\phi_{d,d+1}$.}}

Our paper is structured as follows.  In Section 2 we recall the
relevant facts about star configurations. We also translate our
problem into a new algebraic question, and we compute $\dim
\mathcal{S}(4,5)$. In Section 3 we prove Theorem \ref{maintheorem}
for all tuples $(d,6)$ with $d \geq 5$.   The results of this
section provide a base case for the arguments of Section 4.  We
conclude with remarks about the higher dimensional analog of this
problem.

\noindent {\bf Acknowledgements}  The computer algebra system
CoCoA \cite{C} played an integral role in this project. The first
author thanks the Universit\`a di Catania for its hospitality and
the Politecnico di Torino for financial support.  The third author
acknowledges the support of NSERC. We would like to thank G.
Ottaviani for suggesting this problem to us.  { {We also thank
the referee for his/her useful suggestions and improvements.}}

%%%%%%%%%%%%%%%%%%%%%%%%%%%%%%%%%%%%%%%%%%%%%%%%%%%%%%%%%%%%%%%%%%%%%%

\section{Properties of star configurations}\label{section2}

We recall the relevant results about star configurations in $\pr^2$
and prove an upper bound on $\dim \mathcal{S}(d,l)$.
Although some proofs are omitted, they can
be found in \cite{CGVT,GHM}.

We continue to use the notation
introduced in the introduction.
For any $l \geq 2$,  let $L_1,\ldots,L_l$ be a collection
of $l$ linear forms of $S = k[x_0,x_1,x_2]$ that are
three-wise linearly independent.  We call
such a collection a collection of  {\it general linear forms}.
We let $\XX(l)$
denote the star configuration of $\binom{l}{2}$ points in $\pr^2$
which is formed from all pairwise intersections of the $l$ linear forms.
Note that when $l=2$, then $\XX(2)$ is
simply a point, and if $l =3$, then $\XX(3)$ is three non-collinear points.

%{\red {We first observe the following result from Bezout's
%Theorem:
%\begin{prop} \label{emptyset}$\mathcal{S}(d,l)= \emptyset$ for
%$d<l-1$.
%\end{prop}
%\begin{proof} If a curve lies in $\mathcal{S}(d,l)$, every line of the $l$-lateral meets
%the curve in the $(l-1)$ vertices obtained by the intersection of
%this line with the others $(l-1)$, hence it is a component of the
%curve. A degree $d$ curve cannot contain $l$ lines, hence the
%result.
%\end{proof}}

The following lemma allows us to describe the minimal generators
of the ideal associated to $\XX(l)$ and the Hilbert function of this
ideal.

\begin{lem}\label{basicprop}
 Let $L_1,\ldots,L_l$ be $l\geq 2$ general linear forms of
 $S = k[x_0,x_1,x_2]$, and let $I_{\XX(l)}$ denote
the defining ideal of $\XX(l)$.
\begin{enumerate}
\item[$(i)$]  For each $i \in \{1,\ldots,l\}$, let
$\hat{L}_i = \prod_{j\neq i} L_j$.  Then
$I_{\XX(l)} = (\hat{L}_1,\hat{L}_2,\ldots,\hat{L}_l)$.
\item[$(ii)$]  The set of points $\mathbb{X}(l)$ has the
Hilbert function of  ${l\choose 2}$ generic points, that is
\[HF(\mathbb{X}(l),t)= \dim_k (S/I_{\XX(l)})_t =
 \min\left\{{t+2\choose 2},{l\choose 2}\right\}
~~\mbox{for $t \geq 0$.}\]
\end{enumerate}
 \end{lem}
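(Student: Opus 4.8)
The plan is to prove both parts of Lemma \ref{basicprop}, and I would approach them as follows.

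For part $(i)$, the strategy is to show the two ideals are equal by a double inclusion combined with a degree/Hilbert-function count. First I would verify the easy containment $(\hat L_1,\ldots,\hat L_l) \subseteq I_{\XX(l)}$: a point $P_{ij} = \ell_i \cap \ell_j$ of the star configuration lies on both $\ell_i$ and $\ell_j$, so for any $k$ the form $\hat L_k = \prod_{m \neq k} L_m$ vanishes at $P_{ij}$, since at least one factor $L_i$ or $L_j$ appears in the product (because $k$ can equal at most one of $i,j$). Hence each generator $\hat L_k$ vanishes on all of $\XX(l)$. The harder direction is the reverse containment, or equivalently showing that these generators already cut out exactly the reduced scheme $\XX(l)$ with nothing extra. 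Here I would argue that the $l$ forms $\hat L_1,\ldots,\hat L_l$, each of degree $l-1$, define a zero-dimensional scheme: since the lines are three-wise independent, a common zero of all the $\hat L_k$ must lie on at least two of the lines (if it lay on at most one line $\ell_i$, then $\hat L_i$ would be nonzero there), and no point lies on three lines by the complete $l$-lateral hypothesis, so the common zero locus is exactly the $\binom{l}{2}$ points. A clean way to finish is to note that $\XX(l)$ is a complete intersection locally or, more robustly, to defer to the Hilbert-function computation in part $(ii)$: once one knows the colength of $(\hat L_1,\ldots,\hat L_l)$ equals $\binom{l}{2}$, the saturation must be $I_{\XX(l)}$, and one checks the ideal is already saturated.

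For part $(ii)$, the goal is to compute $HF(\XX(l),t) = \min\{\binom{t+2}{2}, \binom{l}{2}\}$. Since $\XX(l)$ is a set of $\binom{l}{2}$ distinct points, its Hilbert function is nondecreasing, eventually constant at $\binom{l}{2}$, and bounded above by both $\binom{t+2}{2} = \dim_k S_t$ and by $\binom{l}{2}$; so it suffices to show the function attains this upper bound at every $t$. For large $t$ this is automatic from reducedness. The substantive content is the claim that $\XX(l)$ imposes independent conditions in the ``rising'' range, equivalently that $\XX(l)$ behaves like $\binom{l}{2}$ generic points. I would prove this by exhibiting, for each point $P_{ij}$, a curve of the relevant degree passing through all the other points of $\XX(l)$ but not through $P_{ij}$; the natural candidates are products of the lines $\ell_m$. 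Concretely, separating the points using unions of the defining lines shows the points impose independent conditions up to the degree where $\binom{t+2}{2}$ overtakes $\binom{l}{2}$.

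The main obstacle I anticipate is part $(ii)$ in the intermediate degree range: proving the points impose the maximal number of independent conditions requires a genuine argument, not just dimension counting, and one must handle the exact transition degree carefully. The combinatorial separation of points by unions of lines is the crux, and getting the bookkeeping right — matching degrees of separating curves to the values of $t$ where the two binomial coefficients cross — is where the real work lies. The cleanest resolution, which the authors likely use (the lemma is attributed to \cite{CGVT,GHM}), is to invoke the structure of the minimal free resolution or the known fact that star configurations have generic Hilbert function, so in a self-contained writeup I would lean on part $(i)$ to identify the ideal explicitly and then compute the Hilbert function directly from the resolution of the complete intersection-like ideal generated by the $\hat L_i$.
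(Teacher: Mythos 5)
The paper itself does not prove this lemma: its ``proof'' is a pair of citations, to \cite{CGVT} Lemma 2.3(iv) for part $(i)$ and \cite{CGVT} Theorem 2.5 for part $(ii)$, so any self-contained argument is necessarily your own. Your easy inclusion $(\hat L_1,\ldots,\hat L_l)\subseteq I_{\XX(l)}$ is correct, and your local analysis is sound: at $p_{i,j}$ the generators $\hat L_i$ and $\hat L_j$ locally generate $(L_j)$ and $(L_i)$ respectively (all omitted factors being units there), so the scheme cut out by the $\hat L_k$ is the reduced set of $\binom{l}{2}$ points and the \emph{saturation} of $(\hat L_1,\ldots,\hat L_l)$ is $I_{\XX(l)}$. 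The genuine gap is the step you dispose of with ``one checks the ideal is already saturated.'' That check \emph{is} the theorem: it amounts to showing $\dim_k(\hat L_1,\ldots,\hat L_l)_t=\binom{t+2}{2}-\binom{l}{2}$ for every $t\geq l-1$, i.e.\ that the degree-$(l-1)$ generators and their multiples already fill out the full saturated ideal in each degree, and nothing in your sketch produces this. Moreover your plan is circular: part $(i)$ defers its hard step to the Hilbert function of part $(ii)$, while your closing paragraph proposes to get part $(ii)$ by ``leaning on part $(i)$'' and the resolution of the ideal it identifies. The missing content is exactly the induction via basic double linkage (or the explicit resolution) carried out in \cite{CGVT} and \cite{GHM}.

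For part $(ii)$, the separator idea correctly handles only half the degrees. The forms $\hat L_{i,j}=\prod_{h\notin\{i,j\}}L_h$ have degree $l-2$, vanish at every point of $\XX(l)$ except $p_{i,j}$, and are nonzero at $p_{i,j}$ because no three lines concur; hence the points impose independent conditions in degree $l-2$ and therefore in all degrees $t\geq l-2$, which is precisely where $\binom{t+2}{2}\geq\binom{l}{2}$. But separators cannot establish the rising range $t\leq l-3$, where the claim is that \emph{no} curve of degree $t$ contains $\XX(l)$: there you need injectivity, not surjectivity, of the evaluation map, and the required input is the Bezout argument of Remark~\ref{dimension0} (a curve of degree $t<l-1$ through $\XX(l)$ meets each $\ell_i$ in $l-1>t$ points, hence contains all $l$ lines, which is impossible). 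With that added, part $(ii)$ becomes complete and independent of part $(i)$; part $(i)$ still requires the cited argument.
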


\begin{proof}
For $(i)$, see \cite[Lemma 2.3(iv)]{CGVT}.  For $(ii)$, see
\cite[Theorem 2.5]{CGVT}.
 \end{proof}

\begin{rem}\label{dimension0}
Lemma \ref{basicprop} $(i)$ shows that no plane curve of
degree $d$ with $d < l-1$ can contain a star configuration $\XX(l)$.
As a consequence, $\mathcal{S}(d,l) = \emptyset$ for all $(d,l)$ with $d < l-1$.
Alternatively, one could use Bezout's theorem:  if  a degree $d < l-1$ curve  {is} in 
$\mathcal{S}(d,l)$, every line of the star configuration meets
the curve at the $(l-1)$ points obtained by the intersection of
this line with the other $(l-1)$ lines, hence it is a component of the
curve. { A degree $d$ curve cannot contain $l>d+1$ lines}, hence the result.
\end{rem}

{
\begin{rem}
Toh\v{a}neanu \cite{T} 
also considers ideals generated by products of linear forms
with a connection to coding theory.  When the linear forms are general, his
results give an alternative proof to Lemma \ref{basicprop}.
\end{rem}}

We give an upper bound on
$\dim \mathcal{S}(d,l)$.

\begin{lem} \label{upperbound}
Let $l \geq 2$ and $d \geq l-1$ be integers.
Then
\[\dim \mathcal{S}(d,l) \leq  \binom{d+2}{2}-\binom{l}{2}+2l-1.\]
\end{lem}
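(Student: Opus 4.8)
The plan is to bound $\dim \mathcal{S}(d,l)$ by bounding the dimension of the incidence variety $\Sigma_{d,l}$ and using the elementary fact that $\mathcal{S}(d,l) = \phi_{d,l}(\Sigma_{d,l})$ is the image of $\Sigma_{d,l}$ under a morphism, so that $\dim \mathcal{S}(d,l) \le \dim \Sigma_{d,l}$. To compute $\dim \Sigma_{d,l}$ I would instead exploit the \emph{second} projection $\psi \colon \Sigma_{d,l} \to \mathcal{D}_l$ sending $(\mathcal{C},\XX(l))$ to $\XX(l)$, which is surjective precisely because $d \geq l-1$ guarantees that every $\XX(l)$ lies on some degree $d$ curve.

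First I would identify the fibers of $\psi$. The fiber over a star configuration $\XX(l) \in \mathcal{D}_l$ is the set of all degree $d$ curves passing through the $\binom{l}{2}$ points of $\XX(l)$, which is exactly $\PP(I_{\XX(l)})_d$, the projectivization of the degree $d$ piece of the defining ideal. By Lemma \ref{basicprop}$(ii)$, $\XX(l)$ has the Hilbert function of $\binom{l}{2}$ generic points, and since $d \geq l-1$ forces $\binom{d+2}{2} \geq \binom{l+1}{2} > \binom{l}{2}$, we obtain $\dim_k (S/I_{\XX(l)})_d = \binom{l}{2}$ and hence $\dim_k (I_{\XX(l)})_d = \binom{d+2}{2} - \binom{l}{2}$. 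Thus every fiber of $\psi$ is an irreducible projective linear space of the \emph{same} dimension $\binom{d+2}{2} - \binom{l}{2} - 1$.

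Next, because $\mathcal{D}_l$ is an open subset of the irreducible variety $(\check{\PP}^2)^l$, it is irreducible of dimension $2l$. Since $\psi$ is surjective onto this irreducible base with all fibers irreducible of constant dimension, $\Sigma_{d,l}$ is irreducible and
\[
\dim \Sigma_{d,l} = \dim \mathcal{D}_l + \left(\binom{d+2}{2} - \binom{l}{2} - 1\right) = \binom{d+2}{2} - \binom{l}{2} + 2l - 1.
\]
Combining this with $\dim \mathcal{S}(d,l) \le \dim \Sigma_{d,l}$ yields the asserted bound.

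The only real obstacle is to justify that $\dim \Sigma_{d,l}$ equals $\dim \mathcal{D}_l$ plus the fiber dimension, rather than merely being bounded below by it: one must rule out a jump in the fiber dimension over special loci. This is exactly what Lemma \ref{basicprop}$(ii)$ supplies, since the Hilbert function of $\XX(l)$ in degree $d$ is independent of which three-wise independent linear forms are chosen. With constant, irreducible fibers over an irreducible base, the total space has the expected dimension and the upper bound follows immediately.
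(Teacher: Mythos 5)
Your proposal is correct and follows essentially the same route as the paper: both bound $\dim\mathcal{S}(d,l)$ by $\dim\Sigma_{d,l}$ and compute the latter via the second projection $\psi_{d,l}\colon\Sigma_{d,l}\to\mathcal{D}_l$, whose fibres are the linear systems $\PP(I_{\XX(l)})_d$ of constant dimension $\binom{d+2}{2}-\binom{l}{2}-1$ by Lemma \ref{basicprop}$(ii)$. You merely spell out the ``standard fibre dimension argument'' that the paper invokes in one line (and in fact establish the exact value of $\dim\Sigma_{d,l}$, slightly more than the upper bound requires).
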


\begin{proof}
Consider the incidence correspondence $\Sigma_{d,l}$ as given in
\eqref{correspondence}, and let
\[\psi_{d,l}:\Sigma_{d,l}\longrightarrow \mathcal{D}_l
 ~~\text{and}~~~ \phi_{d,l}:\Sigma_{d,l}\longrightarrow \PP S_d\]
be the natural projection maps.
 Note that we are following
the standard convention that $\pr S_d$ is identified with the projective
space $\pr^{N_d}$ where $N_d = \binom{d+2}{2}-1$. Using a standard fibre dimension argument,
if $d\geq l-1$, then
 \[\dim\Sigma_{d,l}\leq \dim \mathcal{D}_l +
 \dim_{k} (I_{\mathbb{X}(l)})_d -1 =
 \dim\mathcal{D}_l+{2+d\choose d}-{l\choose 2} -1= 2l+{2+d\choose d}-{l\choose 2} -1.\]
Here, we are using Lemma \ref{basicprop} $(ii)$ since
$\dim_k S_d - \dim_k (I_{\XX(l)})_d = \binom{l}{2}$ when $d \geq l-1$.
The desired bound now follows from the fact that $\dim \mathcal{S}(d,l) \leq \dim
\Sigma_{d,l}$.
\end{proof}

{\begin{rem} In \cite{Ba}, Section 5, Lemma 2 or in
\cite{D}, Lemma 6.3.24, the authors compute the dimension of
$(I_{\XX(l)})_{l+1}$ and this result can be used to alternatively prove Lemma \ref{upperbound}.
\end{rem}}}

\begin{rem}
As shown in \cite[Theorem 3.1]{CVT}, if $l\geq 6$, then the map
$\phi_{d,l}$ cannot be dominant.
\end{rem}

Inspired by our previous work \cite{CGVT,CVT}, we can reformulate
the problem of computing $\dim \mathcal{S}(d,l)$ in terms of
computing the dimension of an ideal in a specific degree.  In fact,
the proof of \cite[Lemma 4.3]{CVT} already contains the result we
need.
We first perform the following geometric
construction. With $d \geq l -1$ define a map of affine varieties
\[\Phi_{d,l}:\underbrace{ S_1\times\cdots\times
S_1}_{l}\times\underbrace{ S_{d-l+1}\times\cdots\times
S_{d-l+1}}_{l}\longrightarrow S_d\] such that
\[\Phi_{d,l}\left(L_1,\ldots,L_l,M_1,\ldots,M_l\right)=\sum_{i=1}^l M_i\hat
L_i.\] We then rephrase our problem in terms of the map
$\Phi_{d,l}$.

\begin{lem} With notation as above, the image of $\Phi_{d,l}$ is the affine cone
over $\mathcal{S}(d,l)$.  In particular,
$\dim \mathcal{S}(d,l) =
\dim \operatorname{Im}(\Phi_{d,l})-1$.
\end{lem}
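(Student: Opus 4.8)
The plan is to show that the image of $\Phi_{d,l}$, which lives in the vector space $S_d$, is precisely the affine cone over $\mathcal{S}(d,l) \subseteq \PP S_d$, and then deduce the dimension statement from the standard relation between a projective variety and its affine cone. First I would unwind the definitions: a nonzero form $F \in S_d$ lies in the affine cone over $\mathcal{S}(d,l)$ exactly when its class $[F] \in \PP S_d$ belongs to $\mathcal{S}(d,l) = \phi_{d,l}(\Sigma_{d,l})$, i.e.\ exactly when there exists some star configuration $\XX(l) \in \mathcal{D}_l$ with $(\,[F], \XX(l)) \in \Sigma_{d,l}$, which by \eqref{correspondence} means $\mathcal{C} = V(F) \supseteq \XX(l)$, or equivalently $F \in (I_{\XX(l)})_d$.

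The heart of the argument is to identify the condition $F \in (I_{\XX(l)})_d$ with $F$ being in the image of $\Phi_{d,l}$ for a suitable choice of linear forms. By Lemma \ref{basicprop}$(i)$, the ideal $I_{\XX(l)}$ is generated by $\hat L_1,\ldots,\hat L_l$, where $\hat L_i = \prod_{j \neq i} L_j$ and each $\hat L_i$ has degree $l-1$. Hence a form $F$ of degree $d \geq l-1$ lies in $(I_{\XX(l)})_d$ if and only if it can be written as $F = \sum_{i=1}^l M_i \hat L_i$ with each $M_i \in S_{d-l+1}$, since every degree-$d$ element of the ideal is an $S$-combination of the generators, and homogeneity forces the coefficients to sit in degree $d-(l-1) = d-l+1$. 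This is exactly the shape of $\Phi_{d,l}(L_1,\ldots,L_l,M_1,\ldots,M_l)$. Running over all admissible tuples $(L_1,\ldots,L_l) \in \mathcal{D}_l$ and all $(M_1,\ldots,M_l)$ therefore sweeps out precisely $\bigcup_{\XX(l)} (I_{\XX(l)})_d$, which is the affine cone over $\mathcal{S}(d,l)$ together with the origin. One subtlety to flag is that $\Phi_{d,l}$ is defined on all of $S_1^{\,l} \times S_{d-l+1}^{\,l}$, whereas $\mathcal{D}_l$ requires the lines to be three-wise independent; I would note that the non-general tuples form a proper closed subset, so including them does not enlarge the Zariski closure of the image and hence does not affect the dimension count. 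The origin lying in the image is likewise harmless.

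Finally I would invoke the standard fact that for a (nonempty) projective variety $\mathcal{S} \subseteq \PP S_d$, its affine cone $\widehat{\mathcal{S}} \subseteq S_d$ satisfies $\dim \widehat{\mathcal{S}} = \dim \mathcal{S} + 1$, whence $\dim \mathcal{S}(d,l) = \dim \operatorname{Im}(\Phi_{d,l}) - 1$, as claimed. The main obstacle, such as it is, is bookkeeping rather than a deep difficulty: one must be careful that ``image'' is interpreted as the constructible image (or its closure, since dimension is insensitive to this by Chevalley's theorem), and that the passage between the open parameter space $\mathcal{D}_l$ and the full product of vector spaces does not perturb the dimension. Since the proof of \cite[Lemma 4.3]{CVT} already establishes this correspondence, the cleanest route is to cite that computation and supply only the dictionary between $F \in (I_{\XX(l)})_d$ and membership in $\operatorname{Im}(\Phi_{d,l})$.
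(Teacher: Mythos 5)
Your proposal is correct and follows essentially the same route as the paper: one inclusion comes from writing any $F$ vanishing on $\XX(l)$ as $\sum M_i \hat L_i$ via Lemma \ref{basicprop}$(i)$, and the converse direction is handled by observing that the degenerate tuples form a proper closed subset of the irreducible domain, so they cannot affect the closure (hence the dimension) of the image. The paper packages this last point as a contradiction argument about generic fibres being contained in the locus $\Delta$ of dependent triples, while you phrase it directly as the image of a dense open subset having the same closure as the full image; these are the same idea.
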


\begin{proof}
Suppose $H$ is a degree $d$ form that defines a curve
$\mathcal{C}$ that contains a star
configuration $\XX(l)$.  So, there exists linear forms
$L_1,\ldots,L_l$ such that $H \in (\hat{L}_1,\ldots,\hat{L}_l)$,
and hence $H = \sum_{i=1}^l M_i\hat{L}_i$ with $M_i \in S_{d-l+1}$ for each
$i$.  But this means that $H \in \operatorname{Im}(\Phi_{d,l})$.
Viewing elements of $\mathcal{S}(d,l)$ as elements of $\pr S_d$, this
gives $\dim \mathcal{S}(d,l) \leq \dim \operatorname{Im}(\Phi_{d,l})-1$.

Now consider a generic $F \in \operatorname{Im}(\Phi_{d,l})$.
We want
to show that there exists $$(L_1,\ldots,L_l,M_1,\ldots,M_l) \in
\Phi^{-1}_{d,l}(F)$$
such that the linear forms define a star configuration.
Define
$\Delta\subset  S_1\times\cdots\times S_1\times
S_{d-l+1}\times\cdots\times S_{d-l+1}$ as follows:
\[\Delta = \left\{\left(L_1,\ldots,L_l,M_1,\ldots,M_l\right)~\left|
\begin{tabular}{l}
\mbox{there exists $a \neq b \neq c$ such that} \\
\mbox{$L_a,L_b,L_c$ are linearly dependent}
\end{tabular} \right\}\right. .\]
It suffices to show that $\Phi_{d,l}^{-1}(F)$ is
not contained in
$\Delta$ for a generic form $F \in \operatorname{Im}(\Phi_{d,l})$.
Suppose for a contradiction that $\Phi^{-1}_{d,l}(F)$ is contained in $\Delta$.
Then $\Delta$ would be a component of the domain of $\Phi_{d,l}$.
This is a contradiction as the latter is an irreducible variety
being the product of
irreducible varieties.  This completes the proof.
\end{proof}

As a consequence of the above lemma, we only need to compute
$\dim \operatorname{Im}(\Phi_{d,l})$.  As we now show, we can compute
$\dim \operatorname{Im}(\Phi_{d,l})$ by the size of its tangent space.
In fact, this value will equal the vector space dimension
of a graded ideal in a specific degree.

\begin{lem}\label{tgspaceideallem}
Let $l \geq 2$ and $d \geq l-1$ be integers, and consider
$l$ general linear forms $L_1,\ldots,L_l$ in $S$.   Also,
let $M_1,\ldots,M_l \in S_{d-l+1}$ be any homogeneous forms of degree
$d-l+1$.  Set
\[\hat{L}_i = \prod_{j\neq i} L_j ~~\text{and}~~ \hat{L}_{i,j} =
\prod_{h \not\in \{i,j\}} L_h ~~\mbox{for $i\neq j$}.\]
Define the following $l$ forms of degree $d-1$:
\begin{eqnarray*}
Q_1 &= & M_2\hat{L}_{1,2} + M_3\hat{L}_{1,3} + \cdots M_l\hat{L}_{1,l} =
\sum_{i \neq 1} M_i\hat{L}_{1,i} \\
Q_2 &= &  M_1\hat{L}_{2,1} + M_3\hat{L}_{2,3} + \cdots M_l\hat{L}_{2,l} =
\sum_{i \neq 2} M_i\hat{L}_{2,i} \\
& \vdots & \\
Q_l &= &  M_1\hat{L}_{l,1} + M_2\hat{L}_{l,2} + \cdots M_{l-1}\hat{L}_{l,l-1} =
\sum_{i \neq l} M_i\hat{L}_{l,i}. \\
\end{eqnarray*}
With this notation, form the ideal
\[I= (\hat{L}_1,\cdots,\hat{L}_l) + (Q_1,\ldots,Q_l) = I_{\XX(l)} + (Q_1,\ldots,Q_l) \subseteq S.\]
Then $I_d$ is the affine tangent
space to $\operatorname{Im}(\Phi_{d,l})$ at a generic point.  In particular,
\[\dim \mathcal{S}(d,l) = \dim_k I_d -1.  \]
\end{lem}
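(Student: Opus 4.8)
The plan is to compute the tangent space to $\operatorname{Im}(\Phi_{d,l})$ at a generic point by differentiating the parametrization $\Phi_{d,l}$ and then identify the resulting image with $I_d$. First I would fix a generic point $p = (L_1,\ldots,L_l,M_1,\ldots,M_l)$ in the domain and compute the differential $d\Phi_{d,l}|_p$. Since $\Phi_{d,l}$ is a polynomial (in fact bilinear-type) map between affine spaces, its image spans, up to translation, the affine tangent space to $\operatorname{Im}(\Phi_{d,l})$ at the generic image point $F = \Phi_{d,l}(p)$; because the image is a cone, this affine tangent space passes through the origin and has the same dimension as the image itself. The derivative is taken by perturbing each coordinate: replace $L_i$ by $L_i + \eps N_i$ with $N_i \in S_1$, and replace $M_i$ by $M_i + \eps P_i$ with $P_i \in S_{d-l+1}$, expand $\sum_i (M_i+\eps P_i)\widehat{(L+\eps N)}_i$, and collect the coefficient of $\eps$.

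Next I would carry out this expansion explicitly. Perturbing the $M_i$ alone contributes $\sum_i P_i \hat{L}_i$, which as the $P_i$ range over all of $S_{d-l+1}$ sweeps out exactly $(I_{\XX(l)})_d = (\hat L_1,\ldots,\hat L_l)_d$. Perturbing a single $L_j$ inside $\hat L_i = \prod_{h\neq i} L_h$ produces, by the product rule, the term $N_j \prod_{h \neq i,\, h\neq j} L_h = N_j \hat L_{i,j}$ whenever $j \neq i$ (and nothing when $j=i$, since $L_i$ does not appear in $\hat L_i$). Summing the $L$-perturbation contributions and grouping by the perturbing form $N_j$, the total linear part becomes
\[
\sum_{j=1}^{l} N_j \Big( \sum_{i \neq j} M_i \hat{L}_{i,j} \Big) + \sum_{i=1}^{l} P_i \hat{L}_i.
\]
Observing that $\hat L_{i,j} = \hat L_{j,i}$ and comparing the inner sum with the definition of $Q_j$ in the statement, the first double sum is precisely $\sum_{j=1}^l N_j Q_j$. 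As the $N_j$ range over $S_1$, these terms sweep out $(Q_1,\ldots,Q_l)_d$. Hence the image of $d\Phi_{d,l}|_p$ equals $(\hat L_1,\ldots,\hat L_l)_d + (Q_1,\ldots,Q_l)_d = I_d$, which gives $\dim \operatorname{Im}(\Phi_{d,l}) = \dim_k I_d$, and combining with the preceding lemma ($\dim \mathcal{S}(d,l) = \dim \operatorname{Im}(\Phi_{d,l}) - 1$) yields the claimed formula.

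The main obstacle I anticipate is the justification that the differential has locally constant rank on a dense open set, so that the rank at a \emph{generic} point $p$ really equals $\dim \operatorname{Im}(\Phi_{d,l})$ rather than merely bounding it from below. Since $\Phi_{d,l}$ is a morphism of irreducible affine varieties, generic smoothness (in characteristic zero, or a Sard-type argument; one must be mild about the characteristic assumption, though $k$ is only assumed algebraically closed here) guarantees that the rank of the differential is maximal and equals $\dim \operatorname{Im}(\Phi_{d,l})$ on a nonempty open subset of the domain; I would invoke this to pass from the image of $d\Phi_{d,l}$ to the dimension of the image variety. The computation of the coefficient of $\eps$, including the verification that $\hat L_{i,j}$ is symmetric and that the reindexed double sum matches the $Q_j$, is routine bookkeeping and not where the real difficulty lies.
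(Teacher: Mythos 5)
Your proposal is correct and follows essentially the same route as the paper: differentiate the parametrization $\Phi_{d,l}$ at a generic point, observe that the $M$-perturbations sweep out $(\hat L_1,\ldots,\hat L_l)_d$ and the $L$-perturbations sweep out $(Q_1,\ldots,Q_l)_d$ via the symmetry $\hat L_{i,j}=\hat L_{j,i}$, and conclude $T_q = I_d$. The only difference is that you flag the generic-rank/separability issue explicitly, which the paper passes over in silence; that extra care is reasonable but does not change the argument.
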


\begin{proof}
The statement about $\dim \mathcal{S}(d,l)$ follows from the first statement
and the previous lemma.  We need to
determine the tangent space $\operatorname{Im}(\Phi_{d,l})$ in a
generic point $q=\Phi_{d,l}(p)$, where
$p=(L_1,\ldots,L_l,M_1,\ldots,M_l)$. We denote with $T_q$ this
affine tangent space.

The elements of the tangent space $T_q$ are obtained as

\[\left.{d \over dt}\right|_{t=0}
\Phi_{d,l}\left(L_1+tL_1',\ldots,L_l+tL_l',M_1+tM_1',\ldots,M_l+tM_l'\right)\]
when we vary the forms $L_i'\in S_1$ and $M_i'\in S_{d-l+1}$. By a
direct computation we see that the elements of $T_q$ have the form
\[M_1'\hat L_1+\cdots+M_l'\hat L_l+ L_1'(M_2\hat
L_{1,2}+\cdots+M_l\hat L_{1,l}) +\cdots+ \]
\[+L_j'(M_1\hat L_{j,1}+\cdots+M_l\hat L_{j,l})+\cdots+L_l'(M_1\hat
L_{l,2}+\cdots+M_{l-1}\hat L_{l,l-1}),\] where $\hat
L_i=\prod_{j\neq i}L_j$ and $\hat L_{i,j}=\prod_{h \not\in \{i,j\}}L_h, \mbox{ for } i\neq j$.

Since the $L_i'\in S_1$ and $M_i'\in S_{d-l+1}$ can be chosen
freely, we obtain $I_d=T_q$.
\end{proof}

\begin{rem}\label{uppersemi}
As in \cite{CVT}, we can use the above lemma and appeal
to upper-semicontinuity to compute $\dim \mathcal{S}(d,l)$ if we know
a (good) upper bound on $\dim \mathcal{S}(d,l)$.  Indeed, suppose
we know that $\dim \mathcal{S}(d,l) \leq M$.
Given $d$ and $l$ we construct the ideal $I$ as
in Lemma \ref{tgspaceideallem} by choosing forms $L_i$ and $M_i$.
Then we compute
$\dim_k I_d$ using a computer algebra system. If
$\dim_k I_d - 1 = M$, by upper semi-continuity of the
dimension (indeed, the dimension can
decrease only on a proper closed subset),
we have proved $M= \dim_k I_d - 1 \leq \dim \mathcal{S}(d,l) \leq M$, and hence $\dim \mathcal{S}(d,l)=M$
for this pair $(d,l)$.  We will require
this technique for some small values of $d$ and $l$.
\end{rem}

%{\red {In his paper \cite{lurothcite}, L\"uroth proved that a
%nonsingular quartic plane curve containing the ten vertices of a
%complete pentalateral contains infinitely many such $10$-tuples.
%This implies that such curves, called L\"uroth quartics, fill an
%open set of an irreducible, $SL(3)$-invariant hypersurface $
%\mathcal{L}\subset \pr^{14}$.}}

{ {Using our notation,}} it is known that the generic plane
quartic does not contain a $\XX(5)$. We call a quartic containing
a $\XX(5)$ a {\it L\"uroth quartic}.  These objects were
classically studied; see for example
\cite{Ba,D,lurothcite,morley}, and for a modern treatment
\cite{ottaviani sernesi, ottaviani sernesi2}.  Of interest is the
following theorem.

\begin{thm}[{\cite[Theorem 11.4]{ottaviani sernesi}}]\label{luroth}
{\it L\"{u}roth quartics form a hypersurface of degree $54$ in the
space of plane quartics.}
\end{thm}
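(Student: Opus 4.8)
The plan is to prove the statement in two stages: first that the L\"uroth locus $\mathcal{S}(4,5)$ is a hypersurface, and then that its degree equals $54$. The first stage I would obtain from the incidence-correspondence argument already used for Lemma \ref{upperbound}. Over $\mathcal{D}_5$, which has dimension $10$, the fibre of the projection $\psi_{4,5}\colon\Sigma_{4,5}\to\mathcal{D}_5$ is $\mathbb{P}\big((I_{\XX(5)})_4\big)$; by Lemma \ref{basicprop}$(ii)$ one has $\dim_k(I_{\XX(5)})_4=15-10=5$, so every fibre is a $\mathbb{P}^4$ and $\dim\Sigma_{4,5}=14$. Since the generic quartic is not L\"uroth, $\phi_{4,5}$ is not dominant and $\dim\mathcal{S}(4,5)\le 13$. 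To force equality I would verify that the generic fibre of $\phi_{4,5}$ over its image is exactly one-dimensional --- the classical fact, going back to Morley, that a general L\"uroth quartic carries a one-parameter family of inscribed complete pentalaterals. In the language of this paper this is the assertion $\dim_k I_4=14$ for the tangent-space ideal $I$ of Lemma \ref{tgspaceideallem} at a generic choice of data, which one can confirm by the upper-semicontinuity recipe of Remark \ref{uppersemi}.

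For the degree I would reduce to the enumerative count: $\deg\mathcal{S}(4,5)$ equals the number of L\"uroth members of a general pencil $\Lambda\cong\mathbb{P}^1\subset\mathbb{P}S_4$. Because $\Lambda$ has codimension $13$, the preimage $\phi_{4,5}^{-1}(\Lambda)$ is a curve in $\Sigma_{4,5}$ whose irreducible components are in bijection with the L\"uroth points of $\Lambda$, each component collapsing to one such quartic and sweeping out its inscribed pentalateral family; thus $\deg\mathcal{S}(4,5)$ is the number of these components. I would compute this number as an intersection product on a compactification of the projective bundle $\mathbb{P}(E)\to\overline{\mathcal{D}_5}$, where $E$ is the rank-$5$ bundle with fibre $(I_{\XX(5)})_4$, using the relative hyperplane class $\xi=\phi_{4,5}^{*}h$ (with $h$ the hyperplane class of $\mathbb{P}S_4$) together with the Chern classes of $E$; concretely the count becomes an integral of a weighted power of $\xi$ against the Segre--Chern data of $E$, from which the value $54$ should emerge.

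A cleaner alternative for the degree exploits the $SL_3$-action: $S_4\cong\mathrm{Sym}^4(k^3)^{*}$ is a $15$-dimensional representation, and $\mathcal{S}(4,5)$ is stable under projective changes of coordinates, hence is cut out by an $SL_3$ relative invariant $F$. One then identifies $F$ inside the classical invariant ring of ternary quartics (whose generators have known degrees) as the minimal invariant vanishing on L\"uroth quartics and reads off $\deg F=54$; the Clebsch--Scorza correspondence relating L\"uroth quartics to quartics expressible as sums of five fourth powers is the natural tool for pinning down this invariant.

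The main obstacle is the degree computation, not the dimension count. Whichever route one takes, the difficulty is controlling the behaviour of the family at the boundary of $\mathcal{D}_5$: as the five lines degenerate so that three become concurrent, the star configuration collapses, the bundle $E$ and the incidence curve $\phi_{4,5}^{-1}(\Lambda)$ acquire excess or spurious components, and these must be identified and subtracted before the intersection number yields the honest count. Building a compactification on which $E$ extends as a vector bundle and on which these degenerate pentalaterals are accounted for exactly is the delicate heart of the argument; this is precisely the analysis carried out by Ottaviani and Sernesi, and it is what makes $\mathcal{S}(4,5)$ the one locus whose dimension --- and here its degree --- departs from the naive expectation.
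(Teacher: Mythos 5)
The first thing to note is that the paper does not prove this statement at all: Theorem \ref{luroth} is quoted verbatim from \cite[Theorem 11.4]{ottaviani sernesi}, and the authors use it only as an input (in Remark \ref{lurothbound}) to conclude that $\phi_{4,5}$ is not dominant and hence $\dim \mathcal{S}(4,5) \leq 13$. So there is no in-paper argument to compare yours against; what you are attempting is a proof of the cited classical result itself.

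Your first stage is essentially sound and mirrors machinery the paper uses elsewhere: $\dim \Sigma_{4,5} = 14$, non-dominance of $\phi_{4,5}$ (which you must import from outside, e.g.\ from \cite{CVT}, since ``the generic quartic is not L\"uroth'' is itself half of the hypersurface claim), and the tangent-space computation $\dim_k I_4 = 14$ of Remark \ref{lurothbound} to force $\dim \mathcal{S}(4,5) = 13$. The genuine gap is the degree. Neither of your two routes is carried out: the value $54$ is never derived, only announced as what ``should emerge.'' The intersection-theoretic route requires an explicit compactification of $\mathcal{D}_5$ over which the rank-$5$ bundle $E$ extends, the actual Chern/Segre classes of $E$, and --- as you yourself flag --- the identification and removal of excess contributions from degenerate pentalaterals where three lines become concurrent; none of this is done. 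The invariant-theoretic route requires exhibiting the L\"uroth invariant inside the ring of $SL_3$-invariants of ternary quartics and computing its degree, which again is not done. These are precisely the hard steps in Morley's and Ottaviani--Sernesi's work, and deferring them to ``the analysis carried out by Ottaviani and Sernesi'' reduces your argument to a citation of the very theorem being proved. As it stands, the proposal establishes only that the L\"uroth locus is a hypersurface; the degree-$54$ claim remains unproved.
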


We can now prove $\dim \mathcal{S}(4,5) = 13$ using our techniques.

\begin{rem}\label{lurothbound}  By Lemma \ref{upperbound},
$\dim \mathcal{S}(4,5) \leq \binom{6}{4}-\binom{5}{2}+2\cdot5-1
= 14$.  However, Theorem \ref{luroth} implies that the projection
map $\phi_{d,l}: \Sigma_{d,l} \rightarrow \pr S_d$ is not dominant,
so $\dim \mathcal{S}(4,5) \leq 13$.

Now consider the following five linear forms
\[L_1 = x_0; ~~ L_2 = x_1;~~  L_3 = x_2;  ~~ L_4 = x_0+x_1+x_2; ~~\text{and}
~~ L_5 = x_0+2x_1 + 3x_3.\]
We construct the ideal $I$ as in Lemma \ref{tgspaceideallem} where
we take $M_1 = \cdots = M_5 = 1$.  Using
CoCoA\footnote{For our code, see
{\tt http://flash.lakeheadu.ca/$\sim$avantuyl/research/PlaneCurvesStarConfig\_CGVT.html}}
we find that
$\dim_k (I_d) = 14$, whence $13 = \dim_k (I_d) -1 \leq
\dim \mathcal{S}(4,5) \leq 13$,
thus giving the desired result via Remark \ref{uppersemi}.
\end{rem}

Some additional
remarks about computing $\dim_k I_d$ are given below.

\begin{rem} \label{strategy}
 Consider the ideal $I$ in Lemma \ref{tgspaceideallem}.
We wish to compute $\dim_k I_d$.  Now $I = I_{\XX(l)} + (Q_1,\ldots,Q_l)
\subseteq S$.  Because $d \geq l-1$,  by Lemma \ref{basicprop}
$\dim_k(I_{\XX(l)})_d = \binom{2+d}{2}-\binom{l}{2}$.
It then follows that to compute $\dim_k I_d$, it
is enough to compute $\dim_k (\overline{Q}_1,\ldots,\overline{Q}_l)_d$
where
$(\overline{Q}_1,\ldots,\overline{Q}_l) = I/I_{\XX(l)}
\subseteq S/I_{\XX(l)}.$
So we have
\[\dim \mathcal{S}(d,l) = \dim_k (I_d) - 1 =
\dim_k (\overline{Q}_1,\ldots,\overline{Q}_l)_d + \binom{2+d}{2}-\binom{l}{2}
-1.\]
By Lemma \ref{upperbound}, we know $\dim \mathcal{S}(d,l) \leq
\binom{2+d}{2}-\binom{l}{2} + 2l-1.$   So, if we can show that
$\dim_k (\overline{Q}_1,\ldots,\overline{Q}_l) = 2l$ for a specific choice
of $Q_i$'s, then by Remark \ref{uppersemi}, we will in fact have the
equality
$\dim \mathcal{S}(d,l) =
\binom{2+d}{2}-\binom{l}{2} + 2l-1.$

We will employ the following strategy in Sections 3 and 4.
After fixing some star configuration $\XX(l)$, we identify
$2l$ points $p_i$ in the star configuration, and determine $2l$ linear
forms $H_1,\ldots,H_{2l}$.  We then construct a $2l \times 2l$ evaluation
matrix
\[
\begin{array}{c|ccccc}
       & H_1Q_1& H_2Q_1& H_3Q _2& \cdots      & H_{2l}Q_l\\
\hline
p_{1}   & \delta_{1,1}     &  \delta_{1,2}    &  \delta_{1,3}       &  \cdots     &   \delta_{1,2l}      \\
p_{2}   &  \delta_{2,1}     &  \delta_{2,2}    &  \delta_{2,3}       &  \cdots     &   \delta_{2,2l}      \\
p_{2}   &  \delta_{3,1}     &  \delta_{3,2}    &  \delta_{3,3}       &  \cdots     &   \delta_{3,2l}      \\
\vdots &       &        &       &  \vdots     &  \\
p_{2l}  & \delta_{2l,1}     &  \delta_{2l,2}    &  \delta_{2l,3}       &  \cdots     &   \delta_{2l,2l}      \\
\end{array}
\]
where $\delta_{i,j}$ is the point $p_i$ evaluated at the degree $d$ form
that indexes column $j$.
If $\mathcal{M}$ denotes the resulting matrix, then $\mathrm{rk}(\mathcal{M}) =
\dim_k (\overline{Q}_1,\ldots,\overline{Q}_l)_d$ in $S/I_{\XX(l)}$.
\end{rem}

%%%%%%%%%%%%%%%%%%%%%%%%%%%%%%%%%%%%%%%%%%%%%%%%%%%%%%%%%%%%%%%%%%%%%%

\section{The case $l = 6$ and $d\geq 5$}

In this section we compute $\dim \mathcal{S}(d,6)$ for all $d \geq 6-1 = 5$.
We make use of the following notion:  if $\XX(l)$
is the star configuration constructed from $L_1,\ldots,L_l$,
we let $p_{i,j}$ with $1 \leq i < j \leq l$ denote
the point formed by intersection of $\ell_i$ and $\ell_j$.  Thus
$\XX(l) = \{p_{i,j} = \ell_i \cap \ell_j ~:~ 1 \leq i < j \leq l\}.$

\begin{thm}\label{l=6}
For all $d \geq 5$, $\dim \mathcal{S}(d,6) = \binom{d+2}{2}-4$.
\end{thm}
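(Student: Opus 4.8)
The plan is to show that the upper bound of Lemma \ref{upperbound} is attained. For $l=6$ that bound reads $\dim\mathcal S(d,6)\le\binom{d+2}{2}-\binom{6}{2}+2\cdot6-1=\binom{d+2}{2}-4$, so it suffices to produce a matching lower bound. By Remark \ref{strategy} this reduces to exhibiting one choice of general linear forms $L_1,\dots,L_6$ and forms $M_1,\dots,M_6\in S_{d-5}$ for which $\dim_k(\overline Q_1,\dots,\overline Q_6)_d=2\cdot6=12$ in $S/I_{\XX(6)}$; upper-semicontinuity (Remark \ref{uppersemi}) then upgrades a single such example to the generic value. Concretely I would build a $12\times12$ evaluation matrix $\mathcal M$ as in Remark \ref{strategy} and prove it is nonsingular, which forces $\dim_k(\overline Q_1,\dots,\overline Q_6)_d\ge12$ and hence equality.

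The structural input that makes this feasible is the vanishing behaviour of the $Q_i$ on the points $p_{a,b}=\ell_a\cap\ell_b$. Since $\hat L_{i,j}(p_{a,b})\neq0$ exactly when $\{a,b\}=\{i,j\}$, the value $Q_i(p_{a,b})$ is zero unless $i\in\{a,b\}$, in which case $Q_i(p_{i,j})=M_j(p_{i,j})\,\hat L_{i,j}(p_{i,j})$; thus each $Q_i$ is supported only on the $l-1=5$ points lying on $\ell_i$. Moreover, taking the linear multiplier $H$ to be one of the defining forms $L_m$ kills one more of these points: $L_mQ_i$ vanishes at $p_{i,m}$ and is nonzero (for generic $L_i,M_i$) at the remaining points $p_{i,j}$, $j\neq i,m$. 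I would exploit this to make $\mathcal M$ triangular. Identifying the $15$ points with the edges of $K_6$, the $12$ chosen points form $K_6$ minus a perfect matching, a connected $4$-regular graph; orienting it along an Eulerian circuit gives every vertex out-degree $2$, which assigns to each point $p_{i,j}$ exactly one of the two incident forms $Q_i,Q_j$ and determines the two columns $L_mQ_i$ associated with each $Q_i$. Ordering points and columns compatibly with this orientation turns $\mathcal M$ into a (block) triangular matrix whose diagonal entries are values $M_j(p_{i,j})\hat L_{i,j}(p_{i,j})$, all nonzero for a generic choice of the $L_i$ and $M_i$.

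The main obstacle is the combinatorial bookkeeping in the previous step: one must choose the perfect matching to delete, fix the out-degree-$2$ orientation, and select the linear multipliers $L_m$ so that the vanishing pattern $L_mQ_i(p_{i,m})=0$ is simultaneously consistent with a single ordering making every entry on one side of the diagonal vanish. Once such a configuration is pinned down, the determinant of $\mathcal M$ is, up to sign, the product of its nonzero diagonal entries, so $\mathcal M$ is nonsingular and $\dim_k(\overline Q_1,\dots,\overline Q_6)_d=12$. For the base case $d=5$ the $M_i$ are scalars and one may simply take $M_1=\cdots=M_6=1$; for $d>5$ a generic (or explicit monomial) choice of $M_i\in S_{d-5}$ keeps the diagonal entries nonzero. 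Feeding $\dim_k I_d-1=\binom{d+2}{2}-4$ back through Remark \ref{strategy} then yields $\dim\mathcal S(d,6)=\binom{d+2}{2}-4$ for all $d\ge5$.
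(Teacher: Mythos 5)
Your high-level strategy is the same as the paper's: combine the upper bound of Lemma \ref{upperbound} with Remarks \ref{strategy} and \ref{uppersemi}, and certify $\dim_k(\overline{Q}_1,\ldots,\overline{Q}_6)_d=12$ by exhibiting a nonsingular $12\times 12$ evaluation matrix. Your support analysis is also correct: $Q_i(p_{a,b})=0$ unless $i\in\{a,b\}$, and $Q_i(p_{i,j})=M_j(p_{i,j})\hat{L}_{i,j}(p_{i,j})$. The gap is the triangularization step, and it is not just deferred bookkeeping: with your choices of $M_i$ (constants for $d=5$, generic forms for $d>5$) no such triangular structure can exist. If no $M_j$ vanishes at a point of $\XX(6)$, then in the row indexed by $p_{a,b}$ the entry in column $L_mQ_a$ equals $L_m(p_{a,b})M_b(p_{a,b})\hat{L}_{a,b}(p_{a,b})$, which vanishes only when $m=b$; since at most one of the two multipliers chosen for $Q_a$ can be $L_b$, the row $p_{a,b}$ has at least one nonzero entry among the columns of $Q_a$ and at least one among those of $Q_b$, hence at least two in total. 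Dually, each column $L_mQ_i$ is nonzero at the four points $p_{i,j}$ with $j\neq i,m$, at least three of which survive the deletion of a perfect matching. A matrix that can be permuted to triangular form must have some row (equivalently, some column) with at most one nonzero entry, so no ordering of your twelve points and columns is triangular, and the determinant is not a product of diagonal entries. Retreating to genuine block-triangularity does not help either, since then the nonsingularity of the diagonal blocks is exactly the original problem again and is not visible from the sign pattern alone.

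The extra zeros needed to ``peel'' the matrix one pivot at a time must come from the $M_i$, not from the combinatorics of the multipliers $L_m$. This is how the paper proceeds for $d\geq 7$: it takes $M_1=G_1G_2G^{d-7}$, $M_2=G_3G^{d-6}$, etc., with the auxiliary linear forms $G_k$ vanishing at designated points of $\XX(6)$, so that, for instance, the row $p_{1,5}$ retains a single nonzero entry; note this forces $\deg M_1\geq 2$, i.e.\ $d\geq 7$. For $d=5$ and $d=6$, where the $M_i$ cannot be made to vanish at points of $\XX(6)$ without degenerating (a linear $M_1$ through two points of $\ell_1$ is $L_1$ itself), the paper instead verifies the rank $12$ by an explicit computation on a concrete choice of the $L_i$. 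To close your argument you would need either such an explicit determinant computation for $d=5,6$, or a redesign of the $M_i$ along these lines for $d\geq 7$; the Eulerian-orientation bookkeeping by itself cannot produce a triangular matrix.
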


\begin{proof}
We break the proof into three cases:  (1) $d= 5$, (2) $d=6$, and (3)
$d \geq 7$.  For all three cases, we will use the strategy outlined
in Remark \ref{strategy};  thus, it suffices to construct a $12 \times 12$
evaluation matrix with maximal rank.

(1) For the case $d=5$, consider the six linear forms
\[L_1 = x_0; ~~ L_2 = x_1;~~  L_3 = x_2;  ~~ L_4 = x_0+x_1+x_2;
~~ L_5 = x_0+2x_1 + 3x_2; ~~\text{and}~~ L_6 = x_0+3x_1 + 10x_2.\]
When constructing the $Q_i$'s, we set $M_i = 1$ for $i=1,\ldots,6$.
We form the evaluation matrix with columns indexed by
\[L_2Q_4,L_1Q_4,L_3Q_5,L_2Q_5,L_1Q_6,L_3Q_6,L_6Q_1,L_3Q_2,L_6Q_2,L_6Q_3,L_4Q_3,L_5Q_1\]
and rows are index by
\[p_{1,4},p_{2,4},p_{2,5},p_{3,5},p_{3,5},p_{3,6},p_{2,6},p_{1,5},p_{2,6},p_{2,3},
p_{3,4},p_{1,2}.\]
We used CoCoA  to verify that the resulting matrix has the desired rank of 12 (our code can be found
on the third author's website).

(2) For the case $d=6$, we use the same $L_i$'s, but when we construct the $Q_i$'s, we first
find a linear form $G$ that does not contain any of the points of $\XX(6)$, and
set $M_i = G$ for $i=1,\ldots,6$.  Again, the resulting evaluation
matrix (using the same indexing for the rows and columns) has
maximal rank.

(3) We now consider the case $d \geq 7$.  Pick any six general linear
forms $L_1,\ldots,L_6$ that form a $\XX(6)$, and let
$p_{1,2},\ldots,p_{4,5}$ be the 15 points of $\XX(6)$. In order to
construct the $Q_i$'s as defined in Lemma \ref{tgspaceideallem}, we
need to pick six forms $M_1,\ldots,M_6$ in $S_{d-l+1}$.  Since $d
\geq 7$, each $M_i$ will have degree at least two.   We construct
the $M_i$'s as follows.  First, we pick six linear forms in $S$
with the following properties:
\begin{enumerate}
\item[$\bullet$] $G$ is a linear form such that the line $G=0$ does not
pass through any point of $\XX(6)$;
\item[$\bullet$] $G_1$ is a linear form such that line $G_1=0$ only passes
through the point $p_{1,5}$ of $\XX(6)$;
\item[$\bullet$] $G_2$ is a  linear form such that the line $G_2=0$ only passes
through the point $p_{1,2}$ of $\XX(6)$;
\item[$\bullet$] $G_3$ is a  linear form such that the line $G_3=0$ only passes
through the point $p_{2,6}$ of $\XX(6)$;
\item[$\bullet$] $G_4$ is a  linear form such that the line $G_4=0$ only passes
through the point $p_{3,4}$ of $\XX(6)$;
\item[$\bullet$] $G_5$ is a  linear form such that the line $G_5=0$ only passes
through the point $p_{4,6}$ of $\XX(6)$.
\end{enumerate}
We then set
\[\begin{array}{lll}
M_1 = G_1G_2G^{d-l-1} & M_2 = G_3G^{d-l} & M_3 = G_4G^{d-l} \\
M_4 = G^{d-l+1}      &M_5 = G^{d-l+1}  & M_6 = G_5G^{d-l}
\end{array}\]
and use these $M_i$'s to construct $Q_1,\ldots,Q_6$.

We now consider the $12 \times 12$ evaluation matrix (see below) whose columns and
rows are also indexed as above.  When determining the entries of the evaluation matrix, note
that
$L_rQ_t(p_{i,j}) = 0 $ if $i=r$, or $j=r$, or neither $i$ and $j$ equal $t$.
We will also have
\[\begin{array}{lllll}
L_3Q_5(p_{1,5}) = 0 & L_1Q_6(p_{2,6}) = 0 & L_2Q_4(p_{3,4}) = 0 & L_2Q_4(p_{4,6})=0
& L_3Q_2(p_{1,2})=0 \\
L_2Q_5(p_{1,5}) = 0 & L_3Q_6(p_{2,6}) = 0 & L_1Q_4(p_{3,4}) = 0 & L_1Q_4(p_{4,6})=0
& L_6Q_2(p_{1,2}) =0.
\end{array}\]
This follows from our choice of $M_i$'s.  For example,
\footnotesize
\begin{eqnarray*}
L_3Q_5(p_{1,5}) & = & L_3(M_1L_2L_3L_4L_6+M_2L_1L_3L_4L_6+M_3L_1L_2L_4L_6+
M_4L_1L_2L_3L_6 + M_6L_1L_2L_3L_4)(p_{1,5}) \\
& = & M_1L_2L_3^2L_4L_6(p_{1,5}) = 0
\end{eqnarray*}
\normalsize
since $M_1(p_{1,5}) = 0$.  Again by our choice of $M_i$'s, we have
\[
\begin{array}{lllll}
L_2Q_4(p_{1,4}) \neq 0 & L_1Q_4(p_{2,4}) \neq 0 & L_3Q_5(p_{2,5}) \neq 0
& L_2Q_5(p_{3,5}) \neq 0 & L_1Q_6(p_{3,6}) \neq 0 \\
L_3Q_3(p_{1,6}) \neq 0 & L_4Q_3(p_{3,6}) \neq 0 & L_5Q_1(p_{1,6}) \neq 0 &
L_1Q_6(p_{4,6}) \neq 0 & L_3Q_6(p_{4,6}) \neq 0 \\
&L_6Q_1(p_{1,2}) \neq 0 & L_5Q_1(p_{1,2}) \neq 0.
\end{array}
\]
For example,
\footnotesize
\begin{eqnarray*}
L_5Q_1(p_{1,2}) &=& L_5(M_2L_3L_4L_5L_6+M_3L_2L_4L_5L_6+M_4L_2L_3L_5L_6
+ M_5L_2L_3L_4L_6 + M_6L_2L_3L_4L_5)(p_{1,2}) \\
& = & M_2L_3L_4L_5^2L_6(p_{1,2}) \neq 0
\end{eqnarray*}
\normalsize
since $M_2$ does not vanish at $p_{1,2}$, and $p_{1,2}$ does not lie
on the lines defined by $L_3,L_4,L_5$ or $L_6$.

Our evaluation matrix therefore has the form
\footnotesize{
\[
\begin{array}{c|cccccccccccc}
       & L_2Q_4 & L_1Q_4 & L_3Q_5 & L_2Q_5 & L_1Q_6 & L_3Q_6 & L_6Q_1 & L_3Q_2 & L_6Q_2 & L_6Q_3 & L_4Q_3 & L_5Q_1 \\
\hline
p_{1,4} & \star & 0     & 0      & 0      & 0     &  0    & \Box  & 0      & 0     & 0      &  0     & \Box  \\
p_{2,4} & 0     & \star & 0      & 0      &0      &  0    & 0     &\Box    &\Box   & 0      & 0      & 0  \\
p_{2,5} & 0     & 0     &\star   & 0      &0      &  0    & 0     &0       & \Box  &  0     & 0      & 0  \\
p_{3,5} & 0     & 0     & 0      & \star  &0      &  0    & 0     &0       & 0  &  \Box  & \Box      & 0  \\
p_{3,6} & 0     & 0     & 0      & 0      &\star  &  0    & 0     &0       & 0     &  0     & \star  & 0  \\
p_{1,6} & 0     & 0     & 0      & 0      &0      &\star  & 0     &0       & 0     &  0     & 0      & \star  \\
p_{1,5} & 0     & 0     & 0      & 0      & 0     &  0    & \star & 0      & 0     & 0      & 0      & 0  \\
p_{2,6} & 0     & 0     & 0      & 0      &0      &  0    & 0     &\star   & 0     & 0      & 0      & 0  \\
p_{2,3} & 0     & 0     & 0      & 0      &0      &  0    & 0     &0       & \star & \star  & \star  & 0  \\
p_{3,4} & 0     & 0     & 0      & 0      &0      &  0    & 0     &0       & 0     &  \star & 0      & 0  \\
p_{4,6} & 0     & 0     & 0      & 0      &\star  &  \star& 0     &0       & 0     &  0     & 0      & 0  \\
p_{1,2} & 0     & 0     & 0      & 0      &0      &  0    & \star &0       & 0     &  0     & 0      & \star  \\
\end{array}
\]}
\normalsize

\noindent where $\star$ denotes a nonzero entry and $\Box$ denotes
an entry which may or may not be zero. Using Gaussian elimination,
we get a matrix in row echelon form (where the nonzero leading
coefficients are not necessarily equal to 1), with zero entries
below the diagonal. Consequently, the original matrix has maximal
rank, as desired.
\end{proof}

%%%%%%%%%%%%%%%%%%%%%%%%%%%%%%%%%%%%%%%%%%%%%%%%%%%%%%%%%%%%%%%%%%%%%%

\section{The case $l > 6$ and $d \geq l-1$}

We now evaluate $\dim \mathcal{S}(d,l)$ for all $l \geq 6$ when $d \geq l-1$.
The key idea is to pick the $l$ linear forms $L_1,\ldots,L_l$
that define $\XX(l)$ so that the first six forms are
as in Theorem \ref{l=6}.

\begin{thm}\label{l>6}
Let $l \geq 7$ and $d \geq l-1$.  Then
$\dim \mathcal{S}(d,l) = \binom{d+2}{2}-\binom{l}{2}+2l-1$.
\end{thm}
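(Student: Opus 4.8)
The plan is to reduce the case $l \geq 7$ to the already-established base case $l=6$ from Theorem \ref{l=6}, using the reformulation of Remark \ref{strategy}. By Lemma \ref{upperbound}, we already have the upper bound $\dim \mathcal{S}(d,l) \leq \binom{d+2}{2}-\binom{l}{2}+2l-1$, so the whole task is to exhibit, for a single convenient choice of data, an ideal $I$ as in Lemma \ref{tgspaceideallem} satisfying $\dim_k(\overline{Q}_1,\ldots,\overline{Q}_l)_d = 2l$, and then invoke upper-semicontinuity (Remark \ref{uppersemi}) to conclude equality. Equivalently, following Remark \ref{strategy}, it suffices to identify $2l$ points among the $\binom{l}{2}$ points of $\XX(l)$ and $2l$ degree-$d$ forms of the shape $H_j Q_{t(j)}$ so that the resulting $2l \times 2l$ evaluation matrix $\mathcal{M}$ has full rank $2l$.

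First I would choose the $l$ linear forms $L_1,\ldots,L_l$ so that $L_1,\ldots,L_6$ are exactly the six forms used in Theorem \ref{l=6} (appropriately adjusted to degree $d$ via the auxiliary linear forms $G, G_1,\ldots,G_5$ of that proof), and so that $L_7,\ldots,L_l$ are generic forms completing a star configuration. The $12$ ``old'' columns and the $12$ points $p_{1,4},\ldots,p_{1,2}$ from Theorem \ref{l=6} will form a $12\times 12$ block of $\mathcal{M}$ that we already know has maximal rank. The task then becomes to supply $2(l-6) = 2l-12$ additional points and $2l-12$ additional forms $H_jQ_{t(j)}$ (for $t \in \{7,\ldots,l\}$, two per new line) so that the augmented matrix stays full rank. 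The natural choice is to use points $p_{i,t}$ lying on the new lines $\ell_t$ (for $t \geq 7$) together with forms $H_j Q_t$ designed, via the vanishing rules $L_r Q_t(p_{i,j})=0$ unless one of $i,j$ equals $t$ and $r\notin\{i,j\}$, to make the new block lower-triangular with nonzero diagonal while killing the off-diagonal interaction with the old block.

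The key structural fact I would exploit is that $Q_t(p_{i,j})$, and hence $L_rQ_t(p_{i,j})$, vanishes automatically whenever neither $i$ nor $j$ equals $t$; this is precisely the vanishing pattern used in Theorem \ref{l=6} and recorded there. Consequently, if the $2l-12$ new points are chosen to lie only on the new lines $\ell_7,\ldots,\ell_l$ (points of the form $p_{s,t}$ with $7 \leq t$), then the new columns $H_jQ_t$ with $t\geq 7$ will vanish at all twelve old points, and the old columns $L_rQ_t$ with $t \leq 6$ will vanish at the new points provided those new points avoid lines $\ell_1,\ldots,\ell_6$ — which I can arrange by selecting, for each new line $\ell_t$, two intersection points $p_{s,t}$ with $s \geq 7$ as well. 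This would force $\mathcal{M}$ into block-triangular form
\[
\mathcal{M} = \begin{pmatrix} \mathcal{M}_6 & 0 \\ \ast & \mathcal{N} \end{pmatrix},
\]
where $\mathcal{M}_6$ is the full-rank $12\times 12$ matrix of Theorem \ref{l=6} and $\mathcal{N}$ is the $(2l-12)\times(2l-12)$ block coming from the new lines. It then suffices to show $\mathcal{N}$ itself has full rank, which by the same vanishing-pattern bookkeeping I expect to arrange as a lower-triangular matrix with nonzero diagonal (each new line contributing a $2\times 2$ triangular sub-block, analogous to the $L_2Q_4,L_1Q_4$ pairing in the $l=6$ proof).

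The main obstacle, and the part requiring genuine care rather than a clean appeal to the base case, is handling the interaction among the new lines themselves and verifying that the diagonal entries of $\mathcal{N}$ are genuinely nonzero simultaneously — that is, choosing the auxiliary forms $M_7,\ldots,M_l$ (products of carefully chosen linear forms $G$ and line-specific forms, as in case (3) of Theorem \ref{l=6}) so that each $L_rQ_t(p_{s,t})$ on the diagonal is nonzero while the problematic off-diagonal entries within $\mathcal{N}$ vanish. Because each $Q_t = \sum_{i\neq t} M_i \hat{L}_{t,i}$ mixes contributions from all lines, I would need to check that the degree-raising forms $M_i$ for $i \geq 7$ can be prescribed (vanishing at the right new points, nonvanishing at the relevant diagonal points) without disturbing the already-verified old block; the generality of $L_7,\ldots,L_l$ and the freedom in choosing the $G$'s should make this possible, but the combinatorial verification that all required entries have the correct vanishing behaviour simultaneously is where the real work lies. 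Once the triangular structure of both blocks is confirmed, Gaussian elimination gives $\operatorname{rk}(\mathcal{M}) = 2l$, and Remark \ref{uppersemi} closes the argument.
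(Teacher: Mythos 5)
Your overall strategy --- the upper bound of Lemma \ref{upperbound}, a $2l\times 2l$ evaluation matrix built on top of the $l=6$ case, and the conclusion via Remark \ref{uppersemi} --- is exactly the paper's, but your choice of the $2l-12$ new rows does not work. You insist on taking, for each new line $\ell_t$ ($t\ge 7$), two points $p_{s,t}$ with $s\ge 7$, i.e.\ points lying on two of the new lines. There are only $\binom{l-6}{2}$ such points in $\XX(l)$, while you need $2(l-6)$ of them; since $\binom{l-6}{2}<2(l-6)$ for $l\le 10$ (and there are no such points at all when $l=7$), the construction cannot even be set up in the first four cases of the theorem. Moreover, the reason you give for wanting $s\ge 7$ --- that the new points must avoid $\ell_1,\dots,\ell_6$ so that the old columns vanish there --- is not actually needed: the identity $Q_{t'}(p_{i,j})=0$ whenever $t'\notin\{i,j\}$ already forces the \emph{upper-right} block (new columns evaluated at old rows) to vanish, and block lower-triangularity suffices; nonzero entries in the lower-left block are harmless. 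Your requirement buys you a block-diagonal matrix at the cost of making the construction impossible for small $l$ and of putting each new point on two new lines, so that both $Q_s$ and $Q_t$ can contribute at $p_{s,t}$ and the triangularity of your block $\mathcal{N}$ is genuinely unclear.

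The paper's fix is to take the new rows to be $p_{1,t},p_{2,t}$ for $t=7,\dots,l$ (intersections of each new line with the \emph{old} lines $\ell_1,\ell_2$) and the new columns to be $L_2Q_t,L_1Q_t$. Then the lower-right $(2l-12)\times(2l-12)$ block is literally diagonal with nonzero entries (at $p_{1,t}$ the only surviving term of $Q_t$ is $M_1\hat L_{t,1}$, and $L_2$ does not vanish there), the upper-right block is zero by the vanishing identity above, and the possibly nonzero entries in the columns indexed by $Q_1$ and $Q_2$ at the new rows sit in the lower-left block where they do no damage. One further point you gloss over: for $l>6$ the entries of the upper-left $12\times12$ block are the $l=6$ entries multiplied by the extra factor $L_7\cdots L_l(p_{i,j})$, so one must factor the matrix (the paper writes it as $AB$ with $A$ diagonal) and check that these factors are nonzero at the twelve old points; this is immediate from the generality of $\ell_7,\dots,\ell_l$, but it is a step that has to be said before quoting the rank computed in Theorem \ref{l=6}.
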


\begin{proof}
As in Theorem \ref{l=6}, it suffices to construct a $2l \times 2l$
evaluation matrix of rank $2l$.    Let $L_1,\ldots,L_l$ be the $l$
general linear forms that define $\XX(l)$.  If $d=l-1$ or $d=l$, we
let $L_1,\ldots,L_6$ be as in Theorem \ref{l=6}.  When constructing the
$Q_i$'s, we use the following $M_i$'s:
\begin{enumerate}
\item[$\bullet$] If $d=l-1$, let $M_i=1$ for $i=1,\ldots,l$.
\item[$\bullet$] If $d=l$, let $M_i = G$, where $G$ is a linear form
such that the curve $G=0$ does not contain any of the points of $\XX(l)$.
\item[$\bullet$] If $d\geq l+1$, define $M_1,\ldots,M_6$ as in
Theorem \ref{l=6}, but with the added condition that each $M_i$ also
does not vanish at any other point of $\XX(l)$.
We set  $M_7 = \cdots = M_l = G^{d-l+1}$, where again $G$ is a linear
form such that the curve $G=0$ does not contain any points of $\XX(l)$.
\end{enumerate}

When we form our evaluation matrix, we label the first twelve columns
as in Theorem \ref{l=6}, and we label the remain $2l-12$ columns with
$L_2Q_7,L_1Q_7,L_2Q_8,L_1Q_8,\ldots,L_2Q_l,L_1Q_l.$
We label the first twelve rows as in Theorem \ref{l=6} and the
remaining rows are labelled with
$p_{1,7},p_{2,7},p_{1,8},p_{2,8},\ldots,p_{1,l},p_{2,l}.$
Our evaluation matrix then has the form:
\footnotesize{
\[
\begin{array}{c|ccccc|ccccccc}
       & L_2Q_4 & L_1Q_4 & \cdots & L_4Q_3& L_5Q_1& L_2Q_7 & L_1Q_7& L_2Q_8  & L_1Q_8& \cdots & L_2Q_l  & L_1Q_l\\
\hline
p_{1,4} & \star & 0     & \cdots  &  0     & \Box &  0    & 0     & 0      & 0     & \cdots  & 0     & 0     \\
p_{2,4} & 0     & \star & \cdots  & 0      & 0    &  0    & 0     & 0      & 0     & \cdots  & 0     & 0     \\
\vdots &       &       & \vdots  &        &      &       &       &        &       & \vdots  & 0     & 0     \\
p_{4,6} & 0     & 0     & \cdots  & 0      & 0    & 0     & 0     & 0      & 0     & \cdots  & 0     & 0     \\
p_{1,2} & 0     & 0     & \cdots  & 0      & \star& 0     & 0     & 0      & 0     & \cdots  & 0     & 0     \\
\hline
p_{1,7} & 0     & 0     & \cdots  & 0      & \Box & \star & 0     & 0      & 0     & \cdots  & 0     & 0     \\
p_{2,7} & 0     & 0     & \cdots  & 0      & \Box & 0     & \star & 0      & 0     & \cdots  & 0     & 0     \\
p_{1,8} & 0     & 0     & \cdots  & 0      & \Box & 0     & 0     & \star  & 0     & \cdots  & 0     & 0     \\
p_{2,8} & 0     & 0     & \cdots  & 0      & \Box & 0     & 0     & 0      & \star & \cdots  & 0     & 0     \\
\vdots &       &       & \vdots  &        &      &       &       &        &       & \vdots  & 0     & 0     \\
p_{1,l} & 0     & 0     & \cdots  & 0      & \Box & 0     & 0     & 0      & 0     & \cdots  & \star & 0     \\
p_{2,l} & 0     & 0     & \cdots  & 0      & \Box & 0     & 0     & 0      & 0     & \cdots  & 0     & \star  \\
\end{array}
\]}
\normalsize

\noindent
where $\star$ denotes a nonzero entry and $\Box$ denotes an entry which may or may not be zero.

Consider the $12 \times 12$ sub-matrix formed by the first $12$ rows and $12$ columns.  Let $Q'_i$ denote
the form constructed as in Lemma \ref{tgspaceideallem} using $L_1,\ldots,L_6$ and
the same $M_i$'s as above.  Then, for every nonzero
entry in this sub-matrix, we have
\[L_rQ_t(p_{i,j}) = L_rQ'_tL_7L_8\cdots L_l(p_{i,j})= [L_rQ'_t(p_{i,j})][L_7L_8\cdots L_l(p_{i,j})].\]
For example
\begin{eqnarray*}
L_2Q_4(p_{1,4}) & =& L_2(M_1\hat{L}_{4,1} + M_2\hat{L}_{4,2} + \cdots M_l\hat{L}_{4,l})(p_{1,4}) =  L_2M_1\hat{L}_{4,1}(p_{1,4}) \\
&=& L_2(M_1L_2L_3L_5L_6L_7\cdots L_l)(p_{1,4}) =   [L_2(M_1L_2L_3L_5L_6)(p_{1,4})][L_7\cdots L_l(p_{1,4})].
\end{eqnarray*}

We can factor our evaluation matrix as $AB$ where
\[A =
\begin{bmatrix}
L_7\cdots L_l(p_{1,4}) & 0                     & \cdots & 0                     & 0                    &         \\
0                    & L_7\cdots L_l(p_{2,4})) &        & 0                     & 0                    &         \\
\vdots               &                        & \ddots &                       &                      & {\bf 0} \\
0                    &                        &        & L_7\cdots L_l(p_{4,6}) & 0                    &          \\
0                    &  0                     & \cdots & 0                     & L_7\cdots L_l(p_{1,2})&          \\
                     &                        & {\bf 0} &                       &                      & I_{2l-12} \\
\end{bmatrix},\]
where ${\bf 0}$ denotes an appropriate sized zero matrix, and $I_{2l-12}$ is the identity matrix, and
where $B$ is the matrix given by
\footnotesize{
\[B=
\begin{array}{c|ccccc|ccccccc}
       & L_2Q'_4 & L_1Q'_4 & \cdots & L_4Q'_3& L_5Q'_1& L_2Q_7 & L_1Q_7& L_2Q_8  & L_1Q_8& \cdots & L_2Q_l  & L_1Q_l\\
\hline
p_{1,4} & \star & 0     & \cdots  &  0     & \Box &  0    & 0     & 0      & 0     & \cdots  & 0     & 0     \\
p_{2,4} & 0     & \star & \cdots  & 0      & 0    &  0    & 0     & 0      & 0     & \cdots  & 0     & 0     \\
\vdots &       &       & \vdots  &        &      &       &       &        &       & \vdots  & 0     & 0     \\
p_{4,6} & 0     & 0     & \cdots  & 0      & 0    & 0     & 0     & 0      & 0     & \cdots  & 0     & 0     \\
p_{1,2} & 0     & 0     & \cdots  & 0      & \star& 0     & 0     & 0      & 0     & \cdots  & 0     & 0     \\
\hline
p_{1,7} & 0     & 0     & \cdots  & 0      & \Box & \star & 0     & 0      & 0     & \cdots  & 0     & 0     \\
p_{2,7} & 0     & 0     & \cdots  & 0      & \Box & 0     & \star & 0      & 0     & \cdots  & 0     & 0     \\
p_{1,8} & 0     & 0     & \cdots  & 0      & \Box & 0     & 0     & \star  & 0     & \cdots  & 0     & 0     \\
p_{2,8} & 0     & 0     & \cdots  & 0      & \Box & 0     & 0     & 0      & \star & \cdots  & 0     & 0     \\
\vdots &       &       & \vdots  &        &      &       &       &        &       & \vdots  & 0     & 0     \\
p_{1,l} & 0     & 0     & \cdots  & 0      & \Box & 0     & 0     & 0      & 0     & \cdots  & \star & 0     \\
p_{2,l} & 0     & 0     & \cdots  & 0      & \Box & 0     & 0     & 0      & 0     & \cdots  & 0     & \star  \\
\end{array}
\]}
\normalsize

\noindent
But now the matrix $B$ has the property that the $12 \times 12$ sub-matrix in the upper left hand
corner is exactly the same
as the matrix as in Theorem \ref{l=6}.  As a result, this sub-matrix has rank 12.  Furthermore, the lower
$(2l-12) \times (2l-12)$ sub-matrix clearly has maximal rank, and so $B$ has maximal rank.
Finally, since none of the points indexing
the first 12 rows vanish at $L_7,\ldots,L_{l}$, the matrix $A$ also has maximal rank, so our original
evaluation matrix has the desired rank of $2l$.
\end{proof}

For completeness, we now put together all the pieces to prove our
main theorem.
\begin{proof} (of Theorem \ref{maintheorem})
{ {By Lemma \ref{basicprop} and Remark \ref{dimension0}, $\mathcal{S}(d,l) =
\emptyset$ if $d < l-1$.}} The value for $\dim\mathcal{S}(4,5)$
comes from Remark \ref{lurothbound}. The main theorem of
\cite{CVT} determines when $\dim \mathcal{S}(d,l) =
\binom{d+2}{2}-1$. Theorems \ref{l=6} and \ref{l>6} cover the
remaining cases.
\end{proof}
%%%%%%%%%%%%%%%%%%%%%%%%%%%%%%%%%%%%%%%%%%%%%%%%%%%%%%%%%%%%%%%%%%%%%%

\section{Concluding remarks}

It is natural to ask whether the work of this paper can be generalized to star configurations set of points
$\XX(l)$ in $\pr^n$; see \cite{CGVT} for more on this.
Indeed, let $\Sigma_{n,d,l}$ be the incidence correspondence
\[
\Sigma_{n,d,l} = \{(\mathcal{H},\XX(l)) :
\mathcal{H} \supseteq \XX(l) \} \subseteq
\PP S_d \times \mathcal{D}_l.
\]
where now $S = k[x_0,\ldots,x_n]$ and
$\mathcal{D}_l \subseteq \check{\pr}^n \times \cdots \times
\check{\pr}^n$ ($l$ times).
Letting $\phi_{n,d,l}:\Sigma_{n,d,l} \rightarrow \PP S_d$ denote the natural
projection map, we wish to compute the dimension of the corresponding locus,
that is, $\mathcal{S}(d,l,n) = \phi_{n,d,l}(\Sigma_{n,d,l})$.

The proofs of Section \ref{section2} extend naturally to this case, thus giving us the upper
bound
\[\dim \mathcal{S}(d,l,n) \leq \min\left\{\binom{d+n}{n}-1,  \binom{d+n}{n}-\binom{l}{n}+nl-1 \right\}
\mbox{for all $d \geq l-1$}.\]  Computer experiments
suggest that this inequality is an equality for all $d \geq l-1$
with $n \geq 3$.  The results of
\cite{CGVT} already verifies part of this claim when the
minimum is $\binom{d+n}{n}-1$.  We expect
that the approach used in this paper will verify this question;
however, the difficultly is now
finding the correct evaluation matrix and determining its rank.

As an interesting aside, if this equality holds, this would imply that the L\"uroth case is
the only time $\dim \mathcal{S}(d,l,n)$ is not the expected value.
Also notice that the case of L\"uroth quartic is the only one in
the plane for which the locus of star configurations is an
hypersurface, and it is hence defined by a single equation.
Moreover, the locus of a star configurations is never zero
dimensional.

%%%%%%%%%%%%%%%%%%%%%%%%%%%%%%%%%%%%%%%%%%%%%%%%%%%%%%%%%%%%%%%%%%%%%%

\end{document}